%% file: main.tex
\documentclass[12pt]{article}

\input{macros}

\title{Digraphs of Large Girth and Dichromatic Number in Tournaments with Large Dichromatic Number}
\author{Pierre Charbit}
\author{Samuel Coulomb}
\affil{Universit\'e Paris Cit\'e, CNRS, IRIF, F-75013, Paris, France.}
\date{}

\begin{document}

\maketitle

\begin{abstract}
    In the 1960s, \Erd~and Hajnal conjectured that every graph with sufficiently large chromatic number contains a subgraph of large girth (size of a smallest cycle) and large chromatic number.
    In this paper, we prove that every tournament with sufficiently large dichromatic number contains a subdigraph of large digirth (size of a smallest directed cycle) and large dichromatic number.

    We investigate the same statement when replacing digirth by girth (of the underlying graph). We show that it implies the conjecture of \Erd~and Hajnal, and prove it for a particular family of tournaments.
\end{abstract}

\section{Introduction}

The \emph{girth} of a graph $G$, denoted $\girth(G)$, is the minimum size of a cycle in $G$.
In one of the first application of the probabilistic method to graph theory, \Erd~\cite{Erdos} established the existence of graphs with arbitrarily large girth and large chromatic number.
A few years later, he and Hajnal conjectured that moreover, every graph with large enough chromatic number contains a subgraph with large girth and large chromatic number.

\begin{conj}[\Erd--Hajnal \cite{ErdosHajnal}]\label{conj:EH}
    For every integers $k \ge 0$ and $\ell \ge 3$, there exists an integer $f(k,\ell) \ge 0$ such that every graph $G$ with $\chr(G) \ge f(k,\ell)$ contains a subgraph $H \subseteq G$ with $\chr(H) > k$ and $\girth(H) > \ell $.
\end{conj}

Since then, few progress has been made on this conjecture. Rödl \cite{Rodl} proved the case $\ell=3$, and Petite, Tardos, and Walczak \cite{PTW} showed that $f(k,4)$, it it exists, must be lower bounded by a power tower of height $k$.
Besides, Steiner \cite{Steiner} was able to prove the conjecture when replacing \emph{girth} by \emph{odd-girth} (minimum size of an odd cycle), and Li \cite{Li} proved it for graphs with few edges relative to their chromatic number.

\medskip

To our knowledge, this conjecture was never studied in the directed setting.
The \emph{dichromatic number} of a digraph $D$, denoted $\dic(D)$ is the least integer $k \ge 0$ such that the vertices of $D$ can be partitioned into $k$ acyclic sets. This parameter was introduced by \Erd~\cite{Dichro} and Neumann-Lara \cite{VNL}, and is now widely accepted as the correct generalisation of the chromatic number to the directed setting.
The \emph{digirth} of a digraph $D$, denoted $\dig(D)$, is the minimum size of a directed cycle in $D$. A digraph is a \emph{tournament} if it has exactly one arc between every pair of vertices.
We prove the following tournament analogue of the \Erd--Hajnal conjecture.

\begin{theorem}[store=main]\label{thm:main}
    For every integers $k \ge 0$ and $\ell \ge 3$, there exists an integer $f(k,\ell) \ge 0$ such that every tournament $T$ with $\dic(T) \ge f(k,\ell)$ contains a subdigraph $H \subseteq T$ with $\dic(H) > k$ and $\dig(H) > \ell$.
\end{theorem}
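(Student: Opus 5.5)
We will argue by induction on $k$, using the structure theory of tournaments with large dichromatic number. The key tool is \emph{domination}: a tournament with large dichromatic number contains a vertex set $X$ with large dichromatic number that is "far" from a part $Y$, in the sense that most arcs go one way. The basic observation is the following. Let $H_1,\dots,H_m$ be subdigraphs of large digirth, placed in disjoint vertex sets, and suppose all arcs between $V(H_i)$ and $V(H_j)$ for $i<j$ go forward. Keeping only the arcs inside each $H_i$, together with a carefully chosen sparse set of backward arcs, creates no short directed cycle. Such a cycle would have to use many backward arcs, and we control these.

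Concretely, the plan is as follows. Let $H$ be any subdigraph of the tournament $T$. We choose subsets $A_1,\dots,A_N\subseteq V(T)$ and, inside each, a subdigraph $H_i\subseteq T[A_i]$ with $\dic(H_i)>k-1$ and $\dig(H_i)>\ell$, obtained by induction with $f(k,\ell)$ chosen huge. We want the $A_i$ to be \emph{linearly ordered by domination}: for $i<j$, almost all arcs go from $A_i$ to $A_j$. The standard way to get this in tournaments is the Berger et al.\ ("heroes") style argument. If $\dic(T)$ is large, either some out-neighbourhood or in-neighbourhood has large dichromatic number, or we can find a "backbone" ordering $v_1,\dots,v_n$ with few backward arcs relative to the coloring. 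I would use the lemma that every tournament has an ordering in which backward arcs form a digraph whose structure is controlled. We pick a minimum feedback ordering and split it into consecutive intervals $I_1,\dots,I_N$, each of large dichromatic number. By minimality, every vertex has at most as many backward arcs into a later interval as forward ones. A subadditivity argument ($\dic(T)\le\sum\dic(T[I_j])$ fails in general, but $\dic$ of intervals in such an ordering can be made large by a greedy split) gives many intervals of dichromatic number at least $f(k-1,\ell)$.

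Next, we build $H=\bigcup H_i$ plus a set of backward arcs $B$ between intervals, where $B$ is a matching-like family chosen so that any directed cycle using backward arcs uses at least $\ell$ of them. This is a graph of large girth on the "interval level". We take the auxiliary structure on $[N]$ given by the chosen backward pairs to have girth $>\ell$; by Erdős's theorem such a structure with large chromatic number exists. Then we embed it: for each chosen pair $(i,j)$ we pick backward arcs from $I_j$ to $I_i$ landing at suitable vertices. To show $\dic(H)>k$, fix any $k$-colouring. Each $H_i$ has $\dic>k-1$, so every colour class must be used on each $H_i$ in a non-acyclic way, or else the removal of one colour leaves $\dic\ge k$. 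A Ramsey/pigeonhole step across many intervals, together with the chosen backward arcs, then closes a monochromatic directed cycle, using the forward arcs between intervals plus one backward arc.

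The main obstacle is the last step. The backward arcs we need, from a monochromatic vertex of a late $H_j$ to a monochromatic vertex of an early $H_i$ of the same colour, must exist in $T$. Moreover, adding them must not create short directed cycles with the $H_i$ themselves, or with few other backward arcs. To guarantee existence, I would strengthen the induction hypothesis to "$\dic(H)>k$ \emph{robustly}", meaning every subset of half the vertices still has $\dic>k-1$. I would also use that between two intervals with many backward arcs, the backward arcs reach a high-dichromatic part. Handling this existence problem, rather than the girth bookkeeping, is where I expect the real difficulty to lie.
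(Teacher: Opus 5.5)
Your outline (intervals of an ordering, an inductively obtained $H_i$ in each interval, forward arcs between consecutive intervals, some backward arcs) has roughly the shape of the paper's Lemma~\ref{lemma:main}. However, both steps that carry the argument are missing, as you partly acknowledge.

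First, to walk a monochromatic path through consecutive intervals you must find, given $u_{t-1}$ of colour $\alpha$, a vertex of colour $\alpha$ in $H_t$ that is an out-neighbour of $u_{t-1}$. The forbidden set $N^-(u_{t-1})\cap I_t$ lies in the backedge neighbourhood of $u_{t-1}$, and nothing in your setup bounds its dichromatic number. The median-order property is only a counting statement and says nothing about $\dic$. Robustness under deleting ``half the vertices'' is also the wrong notion, since nothing makes this set small in size. The paper gets this control from three ingredients you do not have:
\begin{itemize}
\item the theorem of Crew et al.\ reduces the problem to $\diomega(T)\le h$, and the large-$\diomega$ case is settled by explicit constructions inside $A_{k\ell}$ and $D_{k\ell}$;
\item in a $\diomega$-ordering every backedge neighbourhood has $\diomega\le h-1$, so by induction on $h$ either one finds the desired subdigraph inside it or it has $\dic<g'$;
\item the hypothesis is strengthened to $\dic(H-X)>k$ for every $X$ with $\dic(X)\le c$, and the $H_i$ in the bags are taken robust against deleting sets of dichromatic number at most $2g'+c$.
\end{itemize}
Together these let one delete $N^-(u_{t-1})\cap B_t$ (and also $N^+(u_j)\cap B_{j-1}$ at the last step) and still find colour $\alpha$. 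Without a parameter like $\diomega$ that drops when passing to a backedge neighbourhood, your induction on $k$ alone gives no way to supply this.

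Second, a sparse ``matching-like'' family of backward arcs indexed by an Erd\H{o}s graph gives no reason why any of them should be monochromatic in an arbitrary $k$-dicolouring. No pigeonhole step will force it. The paper instead keeps \emph{all} backward arcs between bags $B_i,B_j$ with $i\equiv j \pmod \ell$. Either one of them is monochromatic, and the path above closes a monochromatic directed cycle, or none is. In the latter case, for each residue class and colour the vertices induce a subtournament with no backward arc between distinct bags, so it has $\dic\le g''$. Hence $\dic(T-X)\le \ell k g''$, contradicting the choice of $g$.

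Note that this auxiliary structure on bag indices is a union of $\ell$ cliques, so its girth is $3$. Digirth is controlled not by the girth of any auxiliary graph but by the fact that forward arcs only join consecutive bags, while each kept backward arc spans at least $\ell$ bags; cycles inside one bag lie in some $H_i$.

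Two smaller slips: $\dic(H_i)>k-1$ only tells you that every colour appears on $H_i$, since colour classes of a dicolouring are acyclic by definition. Also, $\dic(T)\le\sum_j \dic(T[I_j])$ always holds.
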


Note that we forbid $H$ from containing small directed cycles, but it may contain other orientations of a small cycle, such as the one form by three arcs $ab$, $bc$, and $ac$.
We further conjecture that the subdigraph $H$ can be found with no small cycles at all.
The \emph{girth} of a digraph $D$, denoted $\girth(D)$, is the girth of the underlying graph.

\begin{conj}\label{conj:EHT}
    For every integer $k \ge 0$ and $\ell \ge 3$, there exists an integer $f(k,\ell) \ge 0$ such that every tournament $T$ with $\dic(T) \ge f(k,\ell)$ contains a subdigraph $H \subseteq T$ with $\dic(H) > k$ and $\girth(H) > \ell$.
\end{conj}

The class of tournaments is already quite rich, as witnessed by work such as \cite{AJJ, Moon, NSS}. In fact, we show that Conjecture \ref{conj:EHT} is at least as strong as Conjecture \ref{conj:EH}.

\begin{theorem}[store=bis]\label{thm:bis}
    Conjecture \ref{conj:EHT} implies Conjecture \ref{conj:EH}.
\end{theorem}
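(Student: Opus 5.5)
The plan is to encode a graph $G$ as a tournament $T_G$ in which directed cycles are forced to use edges of $G$. Then a subdigraph of $T_G$ of large girth and large dichromatic number hands back a subgraph of $G$ of large girth and large chromatic number. Fix a linear order $v_1<\dots<v_n$ of $V(G)$, to be chosen later. Let $T_G$ be the tournament on $V(G)$ in which, for $i<j$, we put $v_j\to v_i$ if $v_iv_j\in E(G)$ (a \emph{backward} arc) and $v_i\to v_j$ otherwise (a \emph{forward} arc). We then apply Conjecture~\ref{conj:EHT} to $T_G$ and transfer the resulting $H$ back to $G$.

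\textbf{Transfer step.} Suppose $H\subseteq T_G$ has $\dic(H)>k$ and $\girth(H)>\ell$. Let $H'$ be the graph on $V(H)$ whose edges are the backward arcs of $H$. Then $H'$ is a subgraph of $G$, and since it is a subgraph of the underlying graph of $H$, we get $\girth(H')>\ell$. Moreover $\dic(H)\le\chr(H')$. Indeed, a colour class of a proper colouring of $H'$ spans only forward arcs of $H$, hence is acyclic, because it is ordered consistently with $<$. So $\chr(H')>k$, and $H'$ is the subgraph required in Conjecture~\ref{conj:EH}. This part is routine. Conjecture~\ref{conj:EH} therefore follows with $f_{\ref{conj:EH}}(k,\ell)$ equal to whatever threshold guarantees $\dic(T_G)\ge f_{\ref{conj:EHT}}(k,\ell)$.

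\textbf{Main obstacle: making $\dic(T_G)$ large.} The order cannot be arbitrary. If $G$ is a clique, every arc is backward, so $T_G$ is transitive. My first step is therefore to reduce to the triangle-free case using Rödl's theorem (the case $\ell=3$). By that theorem, a graph of huge chromatic number contains a triangle-free subgraph of still huge chromatic number, so we may assume $G$ is triangle-free. It also suffices to work in a subgraph, since the transfer step only needs $H'\subseteq G$.

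Next I would analyse acyclic sets of $T_G$. If $A$ is acyclic, then $A$ contains no triple $a<b<c$ with $ac\in E(G)$ and $ab,bc\notin E(G)$, since such a triple is a directed triangle $a\to b\to c\to a$. So every vertex of $A$ lying strictly between the endpoints of an edge $ac$ of $G[A]$ is adjacent to $a$ or to $c$. Combined with triangle-freeness, this says that between the two ends of any edge of $G[A]$ the set $A$ lies inside $N(a)\cup N(c)$, a union of two stable sets. I would then choose the order to exploit this. First try a BFS order from a root, layer by layer. Gyárfás-type arguments show that some layer $L_i$ carries a large fraction of $\chr(G)$, and inside a layer edges have a controlled structure. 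The aim is to prove $\chr(G[A])\le c$ for every acyclic set $A$ of $T_G$ and some absolute constant $c$. This would be done by a colouring argument that repeatedly takes a shortest-span edge of $G[A]$ and colours the interval it spans with two colours. Granting this, $\dic(T_G)\ge\chr(G)/c$, which completes the argument.

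If this bound turns out to be false for all orders, the fallback is to modify the construction. Substituting for each vertex of $G$ a small transitive tournament, or adding arcs via a lexicographic product with a tournament of large dichromatic number and controlled short cycles, should still keep short cycles in $T_G$ traceable to $G$. This bounding step is where I expect the real difficulty to lie.
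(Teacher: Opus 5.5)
\paragraph{Assessment.} Your construction of $T_G$ is the paper's: a tournament whose backedge graph with respect to a fixed ordering is $G$. Your transfer step is correct and matches the paper's final step. The reduction to triangle-free $G$ via R\"odl's theorem is also legitimate. It plays the role of the paper's simpler reduction: if $\omega(G)\ge |V(H_{k,\ell})|$, then $G$ already contains a fixed Erd\H{o}s graph $H_{k,\ell}$ of girth $>\ell$ and chromatic number $>k$, so one may assume $\omega(G)$ is bounded.

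\paragraph{The gap.} It is the step you yourself call the main obstacle: you never prove that $\dic(T_G)$ is large. You state the aim $\chr(G[A])\le c$ for acyclic $A$, and you sketch a BFS order, ``Gy\'arf\'as-type arguments'' and a shortest-span colouring idea. None of these is carried out, and you add a fallback in case the bound is false. As written, the proposal does not derive Conjecture~\ref{conj:EH} from Conjecture~\ref{conj:EHT}.

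\paragraph{The missing idea.} It is short, and it shows that no special order is needed. Your clique example only shows that $\omega(G)$ must be controlled, which your R\"odl reduction already does. If $A$ is acyclic in $T_G$, then $T_G[A]$ is a transitive tournament with a topological order $<_A$. By construction, for $u,v\in A$ we have $uv\in E(G)$ if and only if $<$ and $<_A$ disagree on $\{u,v\}$. So $G[A]$ is the inversion graph of two linear orders, i.e.\ a permutation graph. Permutation graphs are perfect, hence $\chr(G[A])=\omega(G[A])\le\omega(G)$.

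Your observation that a vertex lying between the ends of an edge of $G[A]$ must be adjacent to one of them is a consequence of this structure. Colouring each class of an optimal dicolouring of $T_G$ with its own palette gives $\chr(G)\le\dic(T_G)\cdot\omega(G)$ for every ordering. This is the Nguyen--Scott--Seymour inequality the paper uses. In your triangle-free setting, every acyclic set induces a bipartite subgraph of $G$, so $\dic(T_G)\ge\chr(G)/2$. With this inequality the proof closes, and the BFS order and fallback constructions are unnecessary.
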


In \Cref{sec:preli}, we introduce all the definitions, notations, and results needed for our proofs.
In \Cref{sec:main}, we prove \Cref{thm:main}. Lastly in \Cref{sec:main}, we prove \Cref{thm:bis} and show that Conjecture \ref{conj:EHT} holds for a specific family of tournament.

\section{Preliminaries}\label{sec:preli}

We refer the reader to classical textbooks such as \cite{BondyMurty} for any undefined terminology.
Given an integer $n \ge 0$, we let $[n]$ denote the set $\{1, \dots, n\}$.
In this paper, an ordering of a graph/digraph means a total ordering of its vertices.

\paragraph{Digraphs}
Let $D$ be a digraph. If $xy \in A(D)$, we say that $x$ is an in \emph{in-neighbour} of $y$, and $y$ an \emph{out-neighbour} of $x$. Given a vertex $x$, we let $N^+(x)$ and $N^-(x)$ denote respectively the set of out-neighbours and the set in-neighbours of $x$.
For two disjoint sets of vertices $X$ and $Y$, we write $X \Rightarrow Y$ to say that $xy \in A(D)$ for all $x \in X$ and all $y \in Y$. For simplicity, when $X = \{x\}$, we write $x \Ra Y$ in place of $\{x\} \Ra Y$.

We also use the symbol $\Ra$ to denote a composition  operation on digraphs: given two digraphs $D_1$ and $D_2$, we let $D_1\Ra D_2$ denote the digraph obtained from the disjoint union of $D_1$ and $D_2$ by adding all arcs from $V(D_1)$ to $V(D_2)$. Lastly, we denote by $D[X]$ the subdigraph of $D$ induced by the set of vertices $X$.

\paragraph{Backedge graphs}
Let $D$ be a digraph and $\prec$ an ordering of $D$. Given two disjoint subset of vertices $A,B \subseteq V(D)$, we write $A \prec B$ to say that $a \prec b$ for all $a \in A$ and all $b \in B$. An arc $uv \in A(D)$ is called \emph{forward} if $u \prec v$, and \emph{backward} if $v \prec u$. The \emph{backedge graph} $D^{\prec}$ of $D$ with respect to $\prec$ is the undirected graph on the vertex set $V(D)$ with an edge $uv$ whenever there is a backward arc between $u$ and $v$.

\paragraph{Dicolouring and directed clique number}
Let $D$ be a digraph and $k \ge 0$ an integer. A \emph{$k$-dicolouring} of $D$ is a partition of its vertices into $k$ subsets each inducing an acyclic subdigraph. The \emph{dichromatic number} of $D$, denoted by $\dic(D)$, is the least integer $k\ge 0$ such that $D$ has a $k$-dicolouring. It is folklore that
\begin{equation}\label{eq:dic}
    \dic(D) = \min \big\{ \chr(D^\prec) \colon \prec \text{ ordering of } D \big\}.
\end{equation}
We define the \emph{directed clique number} $\diomega(D)$ of $D$ as :
\[
    \diomega(D) = \min \big\{ \ome(D^\prec) \colon \prec \text{ ordering of } D \big\}.
\]
For a subset of vertices $X \subseteq V(D)$, when $D$ is clear from the context, we simply write $\dic(X)$ and $\diomega(X)$ in place of $\dic(D[X])$ and $\diomega(D[X])$ respectively.
An ordering $\prec$ of $D$ is called a $\diomega$-ordering if $\omega(D^{\prec})= \diomega(D)$, and a $\dic$-ordering if $\chr(D^{\prec}) = \dic(D)$.

\paragraph{Tournaments}
A \emph{tournament} is an orientation of a complete graph, that is, an oriented graph with exactly one arc between every pair of vertices. 
Given three tournaments $T_1, T_2, T_3$, we denote by $\Delta(T_1,T_2,T_3)$ the tournament obtained from  disjoint copies of $T_1, T_2, T_3$ by adding arcs between them so that $T_1 \Ra T_2 \Ra T_3 \Ra T_1$.

Let $A_0$ and $D_0$ denote one-vertex tournaments, and for every integer $n \ge 1$, let $D_n = \Delta(D_0, D_{n-1}, D_{n-1})$, and denote $A_n$ the tournament made from $n$ copies $T_1, \dots, T_n$ of $A_{n-1}$ and $n+1$ additional vertices $v_0, \dots, v_n$ such that for $0 \le i < j \le n$, we have $v_i \La v_j$ and $v_i \Ra T_j$, and for $1 \le i < j \le n$, we have $T_i \Ra v_j$ and $T_i \Ra T_j$.

\begin{figure}[h]
    \centering
    \begin{tikzpicture}
            \vertex (v) at (0,.75) {} ;
            \node at (0,0) {$\Ra$} ;
            \node[rotate=45] at (-.25,.5) {$\La$} ;
            \node[rotate=-45] at (.25,.5) {$\La$} ;
            \draw[blue, fill=blue!20] (-.75,0) circle (.5) node[black] {$D_{n-1}$} ;
            \draw[blue, fill=blue!20] (.75,0) circle (.5) node[black] {$D_{n-1}$} ;

            \foreach \i in {0,...,3}
                \vertex (v_\i) at (3+2.5*\i,0) {$v_\i$} ;
            \foreach \i in {1,...,3} {
                \vertex[blue, fill=blue!20] (T_\i) at (1.75+2.5*\i,0) {\color{black}$A_{n-1}$} ;
                \node at (2.5*\i+1,0) {$\Ra$} ;
                \node at (2.5*\i+2.5,0) {$\Ra$} ;
            }
            \draw[-latex, red] (v_1) to[bend right=45] (v_0) ;
            \draw[-latex, red] (v_2) to[bend right=45] (v_0) ;
            \draw[-latex, red] (v_3) to[bend right=45] (v_0) ;
            \draw[-latex, red] (v_2) to[bend right=45] (v_1) ;
            \draw[-latex, red] (v_3) to[bend right=45] (v_1) ;
            \draw[-latex, red] (v_3) to[bend right=45] (v_2) ;
            \draw[-{Implies}, double distance=2pt] (T_1) to[bend right] (T_2) ;
            \draw[-{Implies}, double distance=2pt] (T_1) to[bend right] (T_3) ;
            \draw[-{Implies}, double distance=2pt] (T_2) to[bend right] (T_3) ;
    \end{tikzpicture}
    \caption{Tournaments $D_n$ and $A_n$}
    \label{fig:An_Dn}
\end{figure}

These two families of tournaments are at the center of the next theorem which characterizes tournaments with large directed clique number.

\begin{theorem}[Crew, Fan, Koerts, Moore, \& Spirkl \cite{Crew}]\label{thm:An_Dn}
    For every integer $n \ge 0$, there exists an integer $h_n \ge 0$ such that every tournament $T$ with $\diomega(T) \ge h_n$ contains a subtournament isomorphic to $A_n$ or $D_n$.
\end{theorem}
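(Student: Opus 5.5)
This theorem of Crew, Fan, Koerts, Moore and Spirkl is quoted from \cite{Crew} and used as a black box, so here is only how I would go about proving it. The approach is induction on $n$ together with a Ramsey-type argument on a $\diomega$-ordering. The case $n=0$ is trivial. For the induction step, fix $h_{n-1}$ and take a tournament $T$ with $\diomega(T)$ huge. Choose an ordering $\prec$ minimising $\omega(T^\prec)$, so every ordering has a backedge clique of size at least $h:=\diomega(T)$.

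The first step is a \emph{local-to-global} lemma. If every out-neighbourhood and every in-neighbourhood of every vertex has $\diomega$ below some bound $c$, then $\diomega(T)$ is bounded by a function of $c$. I would prove this by building an ordering greedily. Pick a vertex $v$, place $N^-(v)$ before $v$ and $N^+(v)$ after it, and recurse inside each part. A backedge clique in the resulting ordering either lies within one part, which is controlled by induction, or it has one vertex in each part with all edges between them backward. The second case should be excluded by the structure of the splitting, though this needs a more careful decomposition in the style of the ``heroes'' and transitive-subtournament decompositions of Berger et al. Therefore some vertex $v$ has a neighbourhood, say $N^+(v)$, with $\diomega$ still huge. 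Iterating this finds nested sets, which is where the $\Delta$ and chain structures come from.

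The second step uses the induction hypothesis to build $D_n$ and $A_n$. For $D_n = \Delta(D_0,D_{n-1},D_{n-1})$, I need a vertex $v$ and two sets $X \Rightarrow Y$ with $Y\Rightarrow v \Rightarrow X$, each with $\diomega \ge h_{n-1}$. By induction, each of $X$ and $Y$ then contains $A_{n-1}$ or $D_{n-1}$. For $A_n$, I need a chain of sets $T_1\Rightarrow\cdots\Rightarrow T_n$, each of large $\diomega$, interleaved with vertices $v_j$ that are backward-complete to earlier vertices in the required pattern.

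So the plan is a dichotomy inside a large backedge clique of a $\diomega$-ordering. Either there are many \emph{disjoint} high-$\diomega$ intervals that are mutually forward-complete, and backward edges from a clique connect them, which produces the $A_n$ pattern. Or some vertex sees two high-$\diomega$ sets in a cyclic fashion, which produces the $D_n$ pattern.

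The main obstacle is the mixing of types: induction may give $A_{n-1}$ in one part and $D_{n-1}$ in another. I would handle this by strengthening the induction to find $A_m$ or $D_m$ inside \emph{every} part of a structured partition, using Ramsey's theorem on types. Taking $h_n$ to be a tower-type function, one can then pass to a sub-collection of parts that all share the same type, and assemble $A_n$ or $D_n$ from that uniform collection.
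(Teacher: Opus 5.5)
The paper does not prove this statement. It is quoted from \cite{Crew} and used only as a black box in the proof of Theorem~\ref{thm:main}, so there is no internal argument to compare with. Judged on its own, your sketch has a genuine gap, and the one concrete mechanism it offers cannot supply the missing structure.

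Your local-to-global step is actually trivial. For \emph{any} vertex $v$ of a tournament with at least two vertices, list $N^-(v)$ in an optimal order, then $v$, then $N^+(v)$ in an optimal order. This isolates $v$ in the backedge graph, and a backedge clique meets each side in a backedge clique of that side, so $\diomega(T)\le\diomega(N^-(v))+\diomega(N^+(v))$. Crossing cliques cannot be excluded: for the apex $v$ of $D_n$, every arc between the two sides is backward in this ordering. But they need not be excluded, so no heroes-type decomposition is involved.

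Because this inequality holds at every vertex, it carries no structural information, and iterating it produces nothing cyclic. Suppose $v_i\in S_{i-1}$ and $S_i=N^{\pm}(v_i)\cap S_{i-1}$ for $i=1,\dots,m$. List the ``out-type'' $v_i$ by increasing index, then $S_m$, then the ``in-type'' $v_i$ by decreasing index. Every arc not inside $S_m$ is then forward, so $\diomega(T[\{v_1,\dots,v_m\}\cup S_m])=\diomega(S_m)$. Nested neighbourhoods are therefore not where the $\Delta$ and chain structures come from.

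The real content is the dichotomy you announce. It says that huge $\diomega$ forces one of two configurations:
\begin{itemize}
\item $v\Ra X\Ra Y\Ra v$ with $\diomega(X)$ and $\diomega(Y)$ large; or
\item the chain pattern of $A_n$: in the order $v_0,T_1,v_1,\dots,T_n,v_n$, all arcs are forward except those among the $v_i$ and inside the $T_i$, and each $T_i$ has large $\diomega$.
\end{itemize}
You only assert this dichotomy. A vertex whose in- and out-neighbourhoods both have large $\diomega$ is cheap to find. What is hard is \emph{completeness} between sets that keep large $\diomega$: $X\Ra Y$, $T_i\Ra T_j$, $v_i\Ra T_j$. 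Nothing in your plan produces it; intervals of a $\diomega$-ordering typically have many backward arcs between them. Converting optimality of an ordering into such complete pairs is exactly the content of the theorem of Crew et al.

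Separately, Ramsey homogenisation does not settle the mixing of types, because a homogeneous family can still mismatch the configuration. A chain whose parts contain only $D_{n-1}$ yields neither $A_n$ nor $D_n$. A $\Delta$ has just two parts, so it cannot be homogenised at all. The standard repair is an off-diagonal induction: choose $h(a,d)$ so that $\diomega\ge h(a,d)$ forces $A_a$ or $D_d$. Then apply $h(a,d-1)$ inside $X$ and $Y$ in the $\Delta$ case, and $h(a-1,d)$ inside each $T_i$ in the chain case. That fixes only the bookkeeping, though; the structural dichotomy itself is still missing.
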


Thus, a tournament either contains a copy $A_n$ or $D_n$ for some large integer $n \ge 0$, or it has bounded directed clique number. 
We prove \Cref{thm:main} by dealing with these three cases separately in the three lemmas that follow.

\section{Proof of \Cref{thm:main}}\label{sec:main}

First, we show that the tournaments $A_n$ satisfies \Cref{thm:main}.

\begin{lemma}\label{lemma:An}
    For all integers $k \ge 0$ and $\ell \ge 3$, the tournament $A_{k\ell}$ contains a subdigraph $H \subseteq A_{k\ell}$ with $\dic(H) > k$ and $\dig(H) > \ell$.
\end{lemma}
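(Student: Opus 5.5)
We argue by induction on $k$, proving that for every $\ell \ge 3$ the tournament $A_{k\ell}$ contains a subdigraph $H_k$ with $\dic(H_k) > k$ and $\dig(H_k) > \ell$. For $k=0$ a single vertex works. We first note that $A_{n'}$ is a subtournament of $A_n$ whenever $n' \le n$, since $T_1 \cong A_{n-1}$. Hence, writing $n = k\ell$, each of the $n$ copies $T_1,\dots,T_n$ of $A_{n-1}$ inside $A_n$ contains a copy $H^i \subseteq T_i$ of the digraph $H_{k-1}$ given by induction. Each $H^i$ satisfies $\dic(H^i) > k-1$ and $\dig(H^i) > \ell$.

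We read $A_n$ along the ordering $v_0, T_1, v_1, T_2, \dots, T_n, v_n$, as in Figure~\ref{fig:An_Dn}. In this ordering every arc between different parts is forward, except the arcs $v_j v_i$ with $i<j$. We take $H$ on the vertex set
\[
\bigcup_{i=1}^{n} V(H^i) \cup \{v_0, v_\ell, v_{2\ell}, \dots, v_{k\ell}\}.
\]
We keep the following arcs:
\begin{itemize}
\item all arcs of each $H^i$;
\item all arcs from $H^i$ to $H^{i+1}$, for $1 \le i < n$;
\item all arcs from $v_{j\ell}$ to $H^{j\ell+1}$ and from $H^{j\ell}$ to $v_{j\ell}$;
\item all backward arcs $v_{b\ell}v_{a\ell}$ with $a<b$.
\end{itemize}
The one point to check against the definition is the arc set between $T_{j\ell}$ and $v_{j\ell}$. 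If the definition is read strictly, so that it only guarantees $T_i \Ra v_j$ for $i<j$, we instead attach $v_{j\ell}$ using $T_{j\ell-1}$ and shift indices accordingly, at the cost of slightly more copies. I expect this bookkeeping to be routine.

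\emph{Digirth.} Let $C$ be a directed cycle of $H$ that does not lie inside a single $H^i$. Then $C$ must use a backward arc, so it uses some arc $v_{b\ell} v_{a\ell}$ with $a<b$. Every forward arc of $H$ outside the copies advances the index of the copy by at most one. To return from $v_{a\ell}$ to a vertex at least as far as $v_{b\ell}$, the cycle $C$ must therefore visit each of the copies $H^{a\ell+1}, \dots, H^{b\ell}$. That is at least $\ell$ copies, so together with the $v$-vertices $C$ has length greater than $\ell$. Cycles inside a single $H^i$ have length greater than $\ell$ by induction. This step is where the main care is needed: one must verify that combining several backward arcs never shortcuts the forward traversal. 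It holds because backward arcs only decrease the position while forward arcs increase it by one block at a time.

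\emph{Dichromatic number.} Suppose $H$ has a $k$-dicolouring. By pigeonhole, two of the $k+1$ vertices $v_{a\ell}$ and $v_{b\ell}$ with $a<b$ receive the same colour $c$. Every $H^i$ contains a vertex $x_i$ of colour $c$. Otherwise $H^i$ would be $(k-1)$-dicoloured, contradicting $\dic(H^i) > k-1$. Then
\[
v_{a\ell}\, x_{a\ell+1}\, x_{a\ell+2} \cdots x_{b\ell}\, v_{b\ell}\, v_{a\ell}
\]
is a monochromatic directed cycle. Here we use the arcs $H^i \Ra H^{i+1}$, $v_{a\ell} \Ra H^{a\ell+1}$, $H^{b\ell} \Ra v_{b\ell}$ and the backward arc $v_{b\ell}v_{a\ell}$. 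This contradicts the colouring being a dicolouring, so $\dic(H) > k$, which completes the induction.
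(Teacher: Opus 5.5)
Your proposal is correct and is essentially the paper's proof. It uses the same induction on $k$ and the same subdigraph $H$: the copies $H^i$, the vertices $v_0, v_\ell, \dots, v_{k\ell}$, the arcs between consecutive parts, and all backward arcs among the $v_{j\ell}$. Your pigeonhole argument for $\dic(H) > k$ is the paper's too, your level-counting justification of the digirth bound is more careful than the paper's, and the paper also reads $A_n$ with $T_j \Ra v_j$ as in the figure, so your fallback (which would also work) is not needed.
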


\begin{proof}
    We proceed by induction on $k$. When $k=0$, we let $H = A_0$ be a one-vertex digraph.
    Let $k \ge 1$ and suppose the tournament $A_{(k-1)\ell}$ contains a subdigraph $H$ with $\dic(H) > k-1$ and $\dig(H) > \ell$.
    The tournament $A_{k\ell}$ is made from $k\ell$ copies $T_1, \dots, T_{k\ell}$ of $A_{k\ell-1}$ and $k\ell+1$ additional vertices $v_0, \dots, v_{k\ell}$ such that for $0 \le i < j \le n$, we have $v_i \La v_j$ and $v_i \Ra T_j$, and for $1 \le i < j \le n$, we have $T_i \Ra v_j$ and $T_i \Ra T_j$.

    As $(k-1)\ell \le k\ell-1$, each copy $T_i$ of $A_{k\ell-1}$ contains a copy of $A_{(k-1)\ell}$ and thus a subdigraph $H_i \subseteq T_i$ such that $\dic(H_i) > k-1$ and $\dig(H_i) > \ell$, using the induction hypothesis. Let $H$ be the subdigraph of $A_{k \ell}$ induced by every such $H_i$ and the $k+1$ vertices $v_0,v_\ell,\dots,v_{k\ell}$, by further removing the arcs between $H_i$ and $H_j$ when $|i-j| \ne 1$, as well as the arcs between $H_i$ and $v_j$ when $i \notin \{j,j+1\}$ (see \Cref{fig:An}).

    \begin{figure}[h]
        \centering
        \begin{tikzpicture}
            \foreach \i in {0,3,6}
                \vertex (v_\i) at (2*\i,0) {$v_\i$} ;
            \foreach \i in {1.5,3,4.5,7.5,9,10.5}
                \draw[blue, fill=blue!20] (\i,0) circle (.5) ;
            \foreach \i in {1,2,3}
                \node (H_\i) at (1.5*\i,0) {$H_\i$} ;
            \foreach \i in {4,5,6}
                \node (H_\i) at (1.5*\i+1.5,0) {$H_\i$} ;
            \foreach \i in {0,...,7}
                \node at (1.5*\i+.75,0) {$\Ra$} ;
            \draw[-latex, red] (v_3) to[bend right] (v_0) ;
            \draw[-latex, red] (v_6) to[bend right] (v_3) ;
            \draw[-latex, red] (v_6) to[bend right=35] (v_0) ;
            \draw[-{Implies}, double distance=2pt] (5.1,-.3) to[bend right] (6.9,-.3) ;
        \end{tikzpicture}
        \caption{Construction of $H\subseteq A_{k\ell}$ (here $k=2$ and $\ell=3$)}
        \label{fig:An}
    \end{figure}

    We claim that this subdigraph $H \subseteq A_{k\ell}$ verifies $\dic(H) > k$ and $\dig(H) > \ell$. Suppose $H$ has a $k$-dicoloring. By the pigeonhole principle, among the $k+1$ vertices $v_0, v_\ell \dots, v_{k\ell}$ two have the same color $c$, say $v_i$ and $v_j$ with $i<j$. For each $i < t \le j$, as $\dic(H_t) > k-1$, there is a vertex $u_t \in H_t$ with colour $c$. Then, $v_i u_{i+1} \cdots u_jv_j$ form a monochromatic directed cycle, a contradiction. Hence $\dic(H) > k$.
    
    Let $C$ be a directed cycle in $H$.
    If $C \subseteq H_i$ for some $i \in [k\ell]$, then $|C| > \ell$ as $\dig(H_i) > \ell$.
    Otherwise the cycle $C$ intersects at least two of the subdigraphs $H_i$ or the vertices $v_i$.
    Observe that every path from $H_j$ or $v_j$ to $H_i$ or $v_i$ with $i < j$, or from $H_i$ to $v_i$, must use a backward arc of the form $v_{p\ell}v_{q\ell}$.
    The cycle $C$ must also go through the $\ell$ subdigraphs $H_{p\ell+1}, \dots, H_{q\ell}$, thus $|C| \ge \ell+2$.
\end{proof}

Next, we show that the tournaments $D_n$ verify \Cref{thm:main}.

\begin{lemma}\label{lemma:Dn}
    For all integers $k \ge 0$ and $\ell \ge 1$, the tournament $D_{k\ell}$ contains a subdigraph $H \subseteq D_{k\ell}$ with $\dic(H) > k$ and $\dig(H) > 2^\ell$.
\end{lemma}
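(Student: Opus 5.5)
We prove the statement by induction on $k$, following the same pattern as \Cref{lemma:An}. The difference is that the long chain of copies is now supplied by the recursive structure of $D_n$ rather than by the vertices $v_i$. For $k=0$ we take $H=D_0$, a single vertex, which has $\dic(H)=1>0$ and no directed cycle at all.

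The key structural fact is the following. For all $m,j\ge 0$, the tournament $D_{m+j}$ contains $2^j$ pairwise disjoint copies $B_1,\dots,B_{2^j}$ of $D_m$ such that $B_i\Ra B_{i'}$ whenever $i<i'$. We prove it by induction on $j$; the case $j=0$ is trivial. For $j\ge1$, write $D_{m+j}=\Delta(D_0,X,Y)$ with $X,Y$ copies of $D_{m+j-1}$, so that $X\Ra Y$. Applying the induction hypothesis inside $X$ and inside $Y$ gives $2^{j-1}$ chained copies in each. Listing the copies in $X$ before those in $Y$ gives the claim, since every arc from $X$ to $Y$ is forward.

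For the inductive step, let $k\ge1$ and write $D_{k\ell}=\Delta(\{v\},X,Y)$, so that $v\Ra X\Ra Y\Ra v$ and $X,Y$ are copies of $D_{k\ell-1}$. Since $k\ell-1=(k-1)\ell+(\ell-1)$, the fact above (with $\ell\ge1$) yields $2^{\ell-1}$ chained copies of $D_{(k-1)\ell}$ in each of $X$ and $Y$. Concatenating them gives copies $B_1,\dots,B_N$ with $N=2^\ell$ and $B_i\Ra B_{i'}$ for $i<i'$. Moreover $v\Ra B_1$ because $B_1\subseteq X$, and $B_N\Ra v$ because $B_N\subseteq Y$. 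By the induction hypothesis each $B_i$ contains $H_i$ with $\dic(H_i)>k-1$ and $\dig(H_i)>2^\ell$. Let $H$ consist of $v$, the $H_i$, all arcs inside each $H_i$, all arcs from $H_i$ to $H_{i+1}$, all arcs from $v$ to $H_1$, and all arcs from $H_N$ to $v$. All other arcs are discarded.

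To show $\dic(H)>k$, suppose $H$ has a $k$-dicolouring and let $c$ be the colour of $v$. Each $H_i$ is not $(k-1)$-dicolourable, so it uses all $k$ colours; pick $u_i\in H_i$ with colour $c$. Then $v u_1 u_2\cdots u_N v$ is a monochromatic directed cycle, a contradiction.

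To show $\dig(H)>2^\ell$, let $C$ be a directed cycle of $H$. If $C$ lies inside some $H_i$, then $|C|>2^\ell$ by induction. Otherwise, note that every arc of $H$ between distinct parts goes from $H_i$ to $H_{i+1}$, from $v$ to $H_1$, or from $H_N$ to $v$. Hence $H-v$ has no directed cycle leaving a single $H_i$, so $C$ must pass through $v$. Starting from $v$, the cycle must then traverse $H_1,H_2,\dots,H_N$ in order before returning to $v$, so $|C|\ge N+1=2^\ell+1$.

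The only real point to check is the chain-extraction fact and the bookkeeping of indices, namely that $k\ell-1-(\ell-1)=(k-1)\ell$. Everything else mirrors \Cref{lemma:An}.
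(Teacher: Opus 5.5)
Your proposal is correct and follows the paper's proof essentially step for step: you use the same chain-extraction fact to get $2^{\ell-1}$ forward-chained copies of $D_{(k-1)\ell}$ in each of $D'$ and $D''$, the same pruned subdigraph (arcs inside each $H_i$, arcs from $H_i$ to $H_{i+1}$, and $v \Ra H_1$, $H_{2^\ell} \Ra v$), and the same monochromatic-cycle and cycle-length arguments. You also state the induction hypothesis correctly as $\dig(H_i) > 2^\ell$, where the paper's text has a typo ($> \ell$).
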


\begin{proof}
     We proceed by induction on $k$. When $k=0$, we let $H = D_0$ be a one-vertex digraph.
     Let $k \ge 1$ and suppose the tournament $D_{(k-1)\ell}$ contains a subdigraph $H$ with $\dic(H) > k-1$ and $\dig(H) > \ell$.
    The tournament $D_{k\ell}$ is made of a vertex $v$ and two copies $D'$ and $D''$ of $D_{k\ell-1}$ verifying $v \Ra D' \Ra D'' \Ra v$.

    For every $p,q \ge0$, the tournament $D_{p+q}$ contains $2^q$ copies $T_1,\dots,T_{2^q}$ of $D_p$ such that $T_i \Ra T_j$ for all $i<j$ (proved via a trivial induction on $q$).
    In particular, there are copies $T_1, \dots, T_{2^{\ell-1}} \subseteq D'$ and $T_{2^{\ell-1}+1}, \dots, T_{2^\ell} \subseteq D''$ of the tournament $D_{(k-1)\ell}$ satisfying $T_i \Ra T_j$ for all $i<j$ (we use that $D' \Ra D''$).
    
    By induction hypothesis, each copy $T_i$ of $D_{(k-1)\ell}$ contain a subdigraph $H_i \subseteq T_i$ such that $\dic(H_i) > k-1$ and $\dig(H) > \ell$.
    Let $H$ be the digraph induced by every such $H_i$ and the vertex $v$, by further removing the arcs between $v$ and $H_i$ when $i \notin \{1,2^\ell\}$, as well as the arcs between $H_1$ and $H_j$ when $|i-j| \ne 1$ (see \Cref{fig:Dn}).
    
    \begin{figure}[h]
        \centering
        \begin{tikzpicture}
            \draw[red] (-2,0) circle (1.5) ;
            \draw[red] (2,0) circle (1.5) ;
            \node[red] at (-4,0) {$D'$} ;
            \node[red] at (4,0) {$D''$} ;
            
            \vertex (v) at (0,2) {$v$} ;
            \vertex[lightgray] (v') at (-2,.5) {$v'$} ;
            \vertex[lightgray] (v'') at (2,.5) {$v''$} ;
            \node[lightgray, rotate=45] at (-2.25,0) {$\La$} ;
            \node[lightgray, rotate=-45] at (-1.75,0) {$\La$} ;
            \node[lightgray, rotate=45] at (1.75,0) {$\La$} ;
            \node[lightgray, rotate=-45] at (2.25,0) {$\La$} ;
            \node at (-2,-.5) {$\Ra$} ;
            \node at (2,-.5) {$\Ra$} ;
            \draw[-{Implies}, double distance=2pt] (-.5,-.5) to (.5,-.5) ;
            \draw[-{Implies}, double distance=2pt] (v) to[bend right] (-2.75,0) ;
            \draw[-{Implies}, double distance=2pt] (2.75,0) to[bend right] (v) ;
            
            \draw[blue, fill=blue!20] (-2.75,-.5) circle (.5) node[black] {$H_1$} ;
            \draw[blue, fill=blue!20] (-1.25,-.5) circle (.5) node[black] {$H_2$} ;
            \draw[blue, fill=blue!20] (1.25,-.5) circle (.5) node[black] {$H_3$} ;
            \draw[blue, fill=blue!20] (2.75,-.5) circle (.5) node[black] {$H_4$} ;
        \end{tikzpicture}
        \caption{Construction of $H$ (here $\ell=2$)}
        \label{fig:Dn}
    \end{figure}

    We claim that this subdigraph $H \subseteq D_{k\ell}$ verifies $\dic(H) > k$ and $\dig(H) > 2^\ell$. Suppose $H$ has a $k$-dicoloring, and let $c$ denote the color of the vertex $v$. For each $i \in [2^\ell]$, as $\dic(H_i) > k-1$, there is a vertex $u_i \in H_i$ with colour $c$. Then, $v u_1 \cdots u_{2^\ell}$ form a monochromatic directed cycle, a contradiction. Hence $\dic(H) > k$.

    Let $C$ be a directed cycle in $H$.
    If $C \subseteq H_i$ for some $i \in [2^\ell]$, then $|C| > 2^ \ell$ as $\dig(H_i) > \ell$.
    Observe that if otherwise $C$ is not included in one subdigraph $H_i$, then it must go through the vertex $v$ and all subdigraphs $H_i$.
    Hence $|C| \ge 2^\ell+1$.
\end{proof}

We now prove \Cref{thm:main} for tournaments with bounded directed clique number.

\begin{lemma}\label{lemma:main}
    For all integers $h,k,\ell,c \ge 0$, there exists an integer $g(h,k,\ell,c) \ge 0$ such that every tournament $T$ with $\dic(T) \ge g(h,k,\ell,c)$ and $\diomega(T) \le h$ contains a subdigraph $H \subset T$ with $\dig(H) > \ell$ and such that for every subset of vertices $X \subset V(T)$ with $\dic(T[X]) \le c$, we have $\dic(H - X) > k$.
\end{lemma}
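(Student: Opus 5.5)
The plan is a double induction: an outer induction on $h$, and inside it an iteration on $\ell$. The \emph{robustness} clause (deleting any $X$ with $\dic(T[X])\le c$ still leaves $\dic(H-X)>k$) is what lets the inductive constructions glue together. Concretely, if we take many disjoint robust pieces $H_1,\dots,H_m$ and any $k$-dicolouring of a union, each colour class meets each $H_i$ in a set whose removal cannot drop the dichromatic number of $H_i$ to $\le k-1$. This mirrors the pigeonhole step in \Cref{lemma:An} and \Cref{lemma:Dn}.

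Throughout, we fix a $\diomega$-ordering $\prec$ of $T$, so $\omega(T^\prec)\le h$. Every directed cycle uses at least one backward arc, and $\chr(T^\prec)\ge\dic(T)$ is huge.

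The base case $\ell\le 2$ is immediate: $T$ has no $2$-cycles, so take $H=T$. Robustness holds because $\dic(T-X)\ge \dic(T)-c$, so $g=k+c+1$ suffices. The base case $h\le1$ is also easy: then $T^\prec$ has no edges, so $T$ is acyclic and the hypothesis is vacuous once $g\ge 2$.

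For the inductive step we split into two cases according to the backward neighbourhoods $B(x)=N_{T^\prec}(x)$. Since $B(x)$ induces a subgraph of $T^\prec$ with clique number at most $h-1$, we have $\diomega(B(x))\le h-1$.

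\emph{Case 1: some $B(x)$ has large $\dic$.} Then we apply the induction hypothesis on $h$ inside $T[B(x)]$, with the same $\ell$, the same $k$ and the same $c$. This directly yields the required $H$, and robustness transfers because $X\cap B(x)$ still has $\dic\le c$.

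\emph{Case 2: every $B(x)$ has $\dic$ bounded by some $b$.} We cut $\prec$ greedily into consecutive intervals $I_1\prec I_2\prec\dots\prec I_N$, each of large dichromatic number. Because backward neighbourhoods have $\dic\le b$, the backward arcs between intervals at distance at least $2$ carry only limited dicolouring power. More precisely, for any $x\in I_j$, the set of its backward neighbours in $I_1\cup\dots\cup I_{j-2}$ has $\dic\le b$. Yet $\dic(T)$ is large, so there must be many long backward arcs. The goal is to extract a configuration like the one in \Cref{lemma:An}: vertices $w_0,\dots,w_k$ such that $w_j$ has a backward arc to $w_i$ for $i<j$, with $\ell$ intervals between consecutive $w$'s. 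In each of those intervals we place a robust subdigraph with digirth $>\ell$, given by the inner induction. We keep only forward arcs between consecutive pieces and only the chosen long backward arcs. As in \Cref{lemma:An}, any directed cycle not contained in a single piece must use a long backward arc and then cross at least $\ell$ pieces, so its length exceeds $\ell$. Dichromatic number $>k$ (robustly, after replacing $c$ by $c+1$ in the inner parameters) follows from the pigeonhole argument on the $w_i$ combined with robustness of the pieces.

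The main obstacle is Case 2: actually locating the vertices $w_i$ together with long backward arcs whose endpoints can be chained through pieces of large robust dichromatic number. My first attempt would be the following argument. Suppose no such configuration exists. Then colour each interval with few colours, except on a set of $\dic\le b$ per vertex, and derive a dicolouring of $T$ with boundedly many colours, contradicting $\dic(T)\ge g$. This would need a Gyárfás-style layering along $\prec$.
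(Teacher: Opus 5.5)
There is a genuine gap, and it is the step you leave open in Case 2. The configuration you aim for does not exist in general. Vertices $w_0\prec\cdots\prec w_k$ such that $w_j$ has a backward arc to $w_i$ for every $i<j$ form a clique of size $k+1$ in $T^\prec$. But $\omega(T^\prec)=\diomega(T)\le h$, so as soon as $k\ge h$ there is nothing to locate, and no Gy\'arf\'as-style layering can produce it. Pigeonholing on $k+1$ mutually backward vertices is the mechanism of the $A_n$ lemma, and it is only available when $\diomega$ is large; that is why the paper treats $A_n$ separately.

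Two further parts of your scheme are also off:
\begin{itemize}
\item The pieces must have digirth $>\ell$ (cycles inside a piece survive) and robust dichromatic number $>k-1$. So the inner induction must be on $k$ with the same $\ell$, not on $\ell$.
\item Robustness against sets of dichromatic number $c+1$ is too weak. To continue a path from $u_{t-1}\in I_{t-1}$ along a forward arc into $I_t$, you must avoid $N^-(u_{t-1})\cap I_t$. This set lies in a backward neighbourhood, so its dichromatic number can be up to $b$. The same holds for the out-neighbours of the endpoint in the last intermediate interval. Hence the inner parameter has to be $c+2b$.
\end{itemize}

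The missing idea is to stop looking for a configuration and argue directly on a hypothetical $k$-dicolouring of $H-X$. Take bags $B_0\prec\cdots\prec B_t$ of dichromatic number $g''=g(h,k-1,\ell,2b+c)$, and robust pieces $H_i\subseteq T[B_i]$ given by induction on $k$. The paper lets $H$ consist of:
\begin{itemize}
\item the pieces $H_i$;
\item all forward arcs from $B_i$ to $B_{i+1}$;
\item \emph{all} backward arcs from $B_j$ to $B_i$ with $i<j$ and $i\equiv j\pmod{\ell}$ (the long arcs).
\end{itemize}
Digirth $>\ell$ follows as in the $A_n$ lemma. Now suppose $H-X$ has a $k$-dicolouring.
\begin{itemize}
\item If some long arc $u_ju_i$ is monochromatic, use robustness of the intermediate pieces, after deleting $X$ and the relevant backward neighbourhoods. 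This gives a monochromatic forward path $u_i\to u_{i+1}\to\cdots\to u_j$, which closes a monochromatic directed cycle.
\item If no long arc is monochromatic, then within each residue class modulo $\ell$, each colour class has no backward arc between distinct bags. Hence each such class has dichromatic number at most $g''$, so $\dic(T-X)\le \ell k g''$. This contradicts $\dic(T)\ge \ell k g''+c+1$.
\end{itemize}
The contradiction comes from $\dic(T)$ itself rather than from a pigeonhole on special vertices, and this is what makes the bounded clique number harmless.
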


\begin{proof}
    We proceed by induction on $h+k$. When $h=0$, the fact that $\diomega(T) \le 0$ implies that $T$ is empty, so it suffices to set $g(h,k,\ell,c)=1$. When $k=0$, we set $g(h,k,\ell,c)=c+1$, and take $H=(V(T),\emptyset)$. Now suppose $h,k \ge 1$ and set
    \[
         g' := g(h-1,k,\ell,c),\quad g'' := g \big( h, k-1, \ell, 2g'+c \big), \quad\text{and}\quad g(h,k,\ell,c) := \ell k g'' + c+1.
    \]
    
    Fix a $\diomega$-ordering $\prec$ of $T$. For each vertex $v \in V(T)$, as $\omega(T^\prec)=h$, the neighborhood of $v$ in $T^\prec$ contains no $h$-clique, so $\diomega(N_{T^\prec}(v)) \le h-1$. By induction hypothesis, if $\dic(N_{T^\prec}(v)) \ge g' = g(h-1,k,\ell,c)$ then $N_{T^\prec}(v)$ contains a subdigraph $H$ satisfying the desired result. Therefore, we may suppose that
    \[
        \dic(N_{T^\prec}(v))< g' \qquad \text{ for all } v \in V(T).
    \]

    We construct bags of vertices $B_0, B_1, \dots$ by adding vertices to $B_0$, in the order given by $\prec$, until $\dic(B_0) = g''$, then adding the following vertices to $B_1$ until $\dic(B_1) = g''$, and so on until no more vertices remain. We obtain a partition $B_0 \prec \cdots \prec B_t$ of the vertices such that $\dic(B_i) = g''$ for all $0 \le i < t$, and $\dic(B_t) \le g''$.

    For each $0 \le i < t$, as $\dic(B_i) = g'' = g \big( h, k-1, \ell, 2g'+c \big)$, by induction hypothesis, the tournament $T[B_i]$ contain a subdigraph $H_i \subseteq T[B_i]$ with $\dig(H_i) > \ell$ and such that for every subset of vertices $X \subset V(T)$ with $\dic(X) \le 2g'+c$, we have $\dic(H_i - X) > k-1$.
    Let $H$ be the digraph made from the subgraphs $H_i$ for $0 < i < t$, every (forward) arc from $B_i$ to $B_{i+1}$ for $0 \le i < t$, and all (backward) arcs from $B_j$ to $B_i$ for $i<j$ and $i \equiv j \pmod \ell$. In what follows, we refer to this last type of arcs as \emph{long} arcs.

    We claim that $H$ satisfies the desired result.
    The proof that $\dig(H) > \ell$ is basically the same as in \Cref{lemma:An}.
    Let $X \subset V(T)$ be a subset of vertices with $\dic(T[X])\le c$, and toward a contradiction, suppose that $H-X$ has a $k$-dicoloring.
    We distinguish two cases depending on whether there is a monochromatic long arc.
    
    \medskip

    If there is a monochromatic long arc, then we claim that there is a monochromatic directed cycle. 
    First, note that for $v_{t-1} \in B_{t-1}$ and $v_{t+1} \in B_{t+1}$, as $v_{t-1} \prec B_t \prec v_{t+1}$, every in-neighbor of $v_{t-1}$ in $B_t$ is adjacent to $v_{t-1}$ in the backedge graph $T^\prec$, that is $(N^-(v_{t-1}) \cap B_t) \subseteq N_{T^\prec}(v_{t-1})$, and likewise we have $(N^+(v_{t+1}) \cap B_t) \subseteq N_{T^\prec}(v_{t+1})$. Recall that $\dic(N_{T^\prec}(v)) \le g'$ for all vertices $v \in V(T)$, so $\dic(X') \le 2g'+c$ where $X' = (N^-(v_{t-1}) \cap B_t) \cup (N^+(v_{t+1}) \cap B_t) \cup X$, and thus $\dic(H_t-X') > k-1$.
    
    Now let $u_ju_i$ be a long arc with $u_i \in B_i$ and $u_j \in B_j$ ($i<j$) both having color $\alpha$. We iteratively build a monochromatic path $u_i u_{i+1} \cdots u_{j-2}$. For $i < t < j-1$, denote $X_t = (N^-(u_{t-1}) \cap B_t) \cup X$, then as $\dic(H_t - X_t) > k-1$, there exists a vertex $u_t \in H_t - X_t$ with color $\alpha$. Last, denote $X_{j-1} = (N^-(u_{j-2}) \cap B_{j-1}) \cup (N^+(u_{j}) \cap B_{j-1}) \cup X$, then as $\dic(H_{j-1} - X_{j-1}) > k-1$, there exists a vertex $u_{j-1} \in H_{j-1} - X_{j-1}$ with color $\alpha$. Now $u_i \cdots u_j$ is a monochromatic cycle in $H-X$, a contradiction.

    \medskip

    If there is no monochromatic long arc, then we claim that $\dic(T-X) \le \ell k g''$ which contradicts $\dic(T) \ge \ell k g''+c+1$ as $\dic(T[X]) \le c$.
    We partition the bags in $\ell$ classes according to their index modulo $\ell$, that is, for each $i \in [\ell]$, we denote $C_i = \bigcup_{i \equiv j \pmod{\ell}} B_j$. Observe that every backward arc between two different bags of $C_i$ is a long arc and thus bichromatic. For each index $i \in [\ell]$ and color $\alpha \in [k]$, let $T_{i,k}$ be the subtournament of $T-X$ induced by all vertices of color $\alpha$ in $C_i$. Note that there is no arc in $T_{i,k,}$ from $B_j$ to $B_{j'}$ with $j>j'$, so we have $\dic(T_{i,k}) \le \max_j \dic(B_j) = g''$. It follows that $\dic(T-X) \le \ell k g''$, a contradiction. Therefore we have $\dic(H-X) \ge k$.
\end{proof}

We may finally deduce \Cref{thm:main}.

\getkeytheorem{main}

\begin{proof}
    Let $T$ be a tournament with $\dic(T) \ge g(h_{k\ell}, k, \ell, 0)$, where $h_{k\ell} \ge 0$ is defined by \Cref{thm:An_Dn}, and $g(h_{k\ell}, k, \ell, 0) \ge 0$ is defined by \Cref{lemma:main}.
    
    If $\diomega(T) \le h_{k\ell}$, then by \Cref{lemma:main}, the tournament $T$ contains a subdigraph $H \subseteq T$ with $\dig(H) > \ell$ and $\dic(H) > k$.
    If $\diomega(T) > h_{k\ell}$, then by \Cref{thm:An_Dn}, the tournament $T$ contains a subtournament $S \subseteq T$ isomorphic to $A_{k\ell}$ or $D_{k\ell}$ .
    Using \Cref{lemma:An} or \Cref{lemma:Dn}, we can find a subdigraph $H \subseteq S$ with $\dig(H) > \ell$ and $\dic(H) > k$.
\end{proof}

\section{On Conjecture \ref{conj:EHT}} \label{sec:conj}

We now prove \Cref{thm:bis}, and then show that the family of tournaments $(A_n)$ verifies Conjecture \ref{conj:EHT}. For the former, we need the following useful inequality.

\begin{prop}[Nguyen, Scott, \& Seymour \cite{NSS}]\label{prop:ineq}
    Let $T$ be a tournament. For every ordering $\prec$ of $T$, we have:
    \[
        \chr(T^\prec) \le \dic(T) \cdot \ome(T^\prec).
    \]
\end{prop}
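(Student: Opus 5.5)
We fix an ordering $\prec$ of $T$, write $\omega = \ome(T^\prec)$, and proceed by induction on $\omega$. The idea is to peel off a $\dic(T)$-colourable layer of the backedge graph and show that what remains has smaller clique number in $T^\prec$. If $\omega \le 1$, the backedge graph has no edges, so $\chr(T^\prec) \le 1 \le \dic(T)$ whenever $T$ is nonempty. For $\omega \ge 2$, the target is to find a set $S \subseteq V(T)$ such that $T^\prec[S]$ is properly colourable with $\dic(T)$ colours and $\ome(T^\prec - S) \le \omega - 1$. Induction applied to $T - S$ with the restricted ordering then gives $\chr(T^\prec - S) \le \dic(T-S)\,(\omega-1) \le \dic(T)(\omega-1)$. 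Summing, $\chr(T^\prec) \le \dic(T)\,\omega$.

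To build $S$, let $S$ be the set of vertices $v$ that are the $\prec$-minimum vertex of some maximum clique of $T^\prec$. Every maximum clique meets $S$, so $\ome(T^\prec - S) \le \omega-1$ immediately. The real work is to colour $T^\prec[S]$ with $\dic(T)$ colours. Fix an optimal dicolouring $\phi$ of $T$ into $\dic(T)$ acyclic classes. I will try to show that $\phi$ restricted to $S$ is a proper colouring of $T^\prec[S]$, or else modify $S$ so that it becomes one.

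Here is the key tournament step. Suppose $u \prec v$ are both in $S$, they lie in the same class of $\phi$, and $vu$ is a backward arc. Let $K_v$ be a maximum clique with minimum $v$. Every $w \in K_v \setminus \{v\}$ satisfies $v \prec w$ and $wv \in A(T)$. Since $T$ is a tournament, each such $w$ is joined to $u$ by either $uw$ or $wu$.
\begin{itemize}
\item If $wu$ is an arc for every $w$, then $K_v \cup \{u\}$ is a clique of $T^\prec$ of size $\omega+1$, a contradiction.
\item Hence some $w$ has $uw \in A(T)$, which gives a directed triangle $u \to w \to v \to u$.
\end{itemize}
This does not immediately contradict $\phi$, because $w$ may have a different colour. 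So the choice of $S$ needs refining.

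This refinement is the step I expect to be the main obstacle. Rather than using an arbitrary optimal $\phi$, I would define $S$ directly as a union of $\dic(T)$ sets, each independent in $T^\prec$. For each colour class $C$ of $\phi$, restrict to the vertices in $C$, which induce an acyclic subtournament, that is, a transitive one with a topological order $\le_C$. Then greedily pick, among vertices that start a maximum clique, the ones that are minimal in $\le_C$, and repeat. A transitive tournament has no backward arcs relative to its own topological order, and this is what I would use to argue that the chosen vertices in each class are pairwise non-adjacent in $T^\prec$, while still covering every maximum clique.

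If that covering argument fails, my fallback is to peel off colour classes one at a time. I would take a colour class $C$ and $\omega$-colour $T^\prec[C]$ directly: since $T[C]$ is transitive, $T^\prec[C]$ is a permutation-type comparability graph and hence perfect, so $\chr(T^\prec[C]) = \ome(T^\prec[C]) \le \omega$. This gives $\chr(T^\prec) \le \sum_C \chr(T^\prec[C]) \le \dic(T)\,\omega$ directly, and it may be the cleanest route overall; I would try it first.
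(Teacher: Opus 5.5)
The paper does not prove this proposition; it only cites Nguyen, Scott and Seymour. Your fallback argument is a complete, correct proof, and it is the usual one. Each class $C$ of an optimal dicolouring induces an acyclic, hence transitive, subtournament. So $T^\prec[C]$ is exactly the graph of pairs on which $\prec$ and the topological order of $T[C]$ disagree. That is a permutation graph, hence a comparability graph, hence perfect. Therefore $\chi(T^\prec[C])=\omega(T^\prec[C])\le\omega(T^\prec)$, and using disjoint palettes over the $\dic(T)$ classes gives $\chi(T^\prec)\le\dic(T)\,\omega(T^\prec)$. Present this as the proof.

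Drop the global peeling. It is not just missing a detail; it fails for your choice of $S$. Take $1\prec 2\prec\cdots\prec 6$ and let the tournament have backedge graph equal to the cycle $1\,2\,3\,4\,5$ plus the edge $56$. Then:
\begin{itemize}
\item $\omega(T^\prec)=2$, and the maximum cliques are the six edges.
\item $\dic(T)=2$: the classes $\{1,3,4,6\}$ and $\{2,5\}$ are transitive, and $1\to 3\to 5\to 1$ is a directed triangle.
\item The set of $\prec$-minima of maximum cliques is $S=\{1,\dots,5\}$, which induces a $5$-cycle in $T^\prec$ and needs $3>\dic(T)$ colours.
\end{itemize}
Your greedy refinement is also not precise enough to check. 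In particular, it is unclear why choosing $\le_C$-minimal starters would still meet every maximum clique of $T^\prec$.

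Your peeling idea does work if you run it inside a single class $C$, using maximum cliques of $T^\prec[C]$ instead of $T^\prec$. Then the vertex $w$ in your triangle $u\to w\to v\to u$ lies in $C$, which contradicts the acyclicity of $T[C]$. So the $\prec$-minima of maximum cliques of $T^\prec[C]$ form an independent set, and induction gives $\chi(T^\prec[C])\le\omega(T^\prec[C])$. That is precisely the perfectness fact your fallback uses, proved by hand, so the two routes coincide once the peeling is done per class.
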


\getkeytheorem{bis}

\begin{proof}
    Let $k\ge 0$ and $\ell \ge 3$.
    Suppose there exists an integer $f(k,\ell) \ge 0$ that satisfies Conjecture \ref{conj:EHT}.
    Fix a graph $H_{k,\ell}$ with $\girth(H_{k,\ell}) > \ell$ and $\chr(H_{k,\ell}) > k$, which exists by \cite{Erdos}, and let $G$ be a graph with $\chr(G) \ge |V(H_{k,\ell})| \cdot f(k,\ell)$. If $G$ contains a $|V(H_{k,\ell})|$-clique, then $H_{k,\ell}$ is a subgraph of $G$ and we're done, so $\omega(G) < |V(H_{k,\ell})|$.
    
    Let $T$ be a tournament that admits an ordering $\prec$ such that $T^\prec = G$. By Proposition~\ref{prop:ineq}, we have $\chr(G) \le \dic(T) \cdot \omega(G)$ which implies $\dic(T) \ge f(k,\ell)$. By our assumption, the tournament $T$ contains a subdigraph $S \subseteq T$ such that $\girth(S) > \ell$ and $\dic(S) > k$. Then, $S^\prec$ is a subgraph of $T^\prec = G$ with $\girth(S^\prec) > \ell$ and $\chr(S^\prec) > k$.
\end{proof}

In support of Conjecture \ref{conj:EHT}, we prove that it holds for the family $(A_n)$. We use the existence of hypergraphs with large uniformity, chromatic number, and girth.

Let $H$ be an hypergraph and $r \ge 2$ an integer. We say that $H$ is \emph{$r$-uniform} if every hyperedge $e \in E(H)$ has size $r$. The \emph{chromatic number} of $H$, denoted $\chr(H)$, is the least integer $k \ge 0$ such that the vertices of $H$ can be partitioned in $k$ sets, none of which contains an hyperedge. A \emph{closed trail} of size $k$ in $H$ is an alternating sequence $v_1e_1\cdots v_ke_k$ of distinct vertices $v_i \in V(H)$ such that for every $i \in [k]$, we have $v_i \in e_i$ and $v_{i+1} \in e_i$ (with indices taken modulo $k$). If furthermore all edges are distinct in the sequence, then the closed trail is a \emph{cycle}. The \emph{girth} of $H$, denoted $\girth(H)$, is the minimum size of a cycle in $H$. 

\begin{theorem}[\Lov~\cite{Lovasz}]\label{thm:Lov}
    For every integers $k \ge 0$ and $\ell,r \ge 2$, there exists an $r$-uniform hypergraph $H$ with $\chr(H) > k$ and $\girth(H) > \ell$.
\end{theorem}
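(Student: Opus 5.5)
We would prove \Cref{thm:Lov} by the probabilistic deletion method, in the spirit of \Erd's original argument~\cite{Erdos}: take a random $r$-uniform hypergraph with slightly superlinear density, show that it has few short cycles and no large independent sets, and then delete one vertex from each short cycle. Fix $k,\ell,r$ and a constant $0<\theta<1/\ell$. Let $n$ be large and form $H_0$ on vertex set $[n]$ by choosing $m=\lceil n^{1+\theta}\rceil$ hyperedges independently and uniformly among the $r$-subsets of $[n]$. Repeated hyperedges form a cycle of size $2$ and are handled below.

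\textbf{Step 1: few short cycles.} For $2\le j\le \ell$, we bound the expected number $N_j$ of cycles of size $j$. Such a cycle is a sequence of $j$ distinct vertices $v_1,\dots,v_j$ and $j$ distinct hyperedges $e_1,\dots,e_j$ with $\{v_i,v_{i+1}\}\subseteq e_i$. There are at most $n^j$ choices for the vertices and at most $m^j$ choices for which of the random edges play the roles of $e_1,\dots,e_j$. Since the edges are distinct random draws, they are independent, and each contains a prescribed pair with probability $\binom{r}{2}/\binom{n}{2}=O(n^{-2})$. Hence
\[
\mathbb{E}[N_j]\le n^j m^j\, O(n^{-2j}) = O(n^{\theta j}),
\]
so $\mathbb{E}\big[\sum_{j\le \ell} N_j\big]=O(n^{\theta\ell})=o(n)$. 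By Markov's inequality, with probability at least $1/2$ there are at most $n/2$ short cycles for $n$ large.

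\textbf{Step 2: no large independent sets.} Set $s=\lceil n/(2k)\rceil$. A fixed $s$-set $S$ contains a given random edge with probability $\binom{s}{r}/\binom{n}{r}\ge c_{k,r}>0$ for $n$ large. Therefore $S$ contains no edge with probability at most $(1-c_{k,r})^m\le e^{-c_{k,r}n^{1+\theta}}$. A union bound over at most $2^n$ sets shows that with probability $1-o(1)$ every $s$-set of vertices contains a hyperedge of $H_0$.

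\textbf{Step 3: deletion.} Fix an outcome satisfying both events. Delete one vertex from each cycle of size at most $\ell$, and with it all incident edges; this removes at most $n/2$ vertices. Call the result $H$. It is still $r$-uniform, and every cycle of $H$ is a cycle of $H_0$ avoiding the deleted vertices, so $\girth(H)>\ell$. The hypergraph $H$ is the induced subhypergraph of $H_0$ on the remaining vertices, so every $s$-subset of $V(H)$ still contains an edge of $H$. Now $|V(H)|\ge n/2$, so any $k$-colouring of $H$ has a class of size at least $n/(2k)$, hence a class containing a hyperedge. Thus $\chr(H)>k$.

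\textbf{Main obstacle.} The delicate point is the cycle count in Step 1. It must cover degenerate configurations, in particular cycles of size $2$ (two edges sharing at least two vertices) and the requirement that the edges be distinct. The estimate must also stay $o(n)$ uniformly for all $j\le\ell$, which is exactly what forces $\theta<1/\ell$. Everything else is routine once this calculation is checked carefully.
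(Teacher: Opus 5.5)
The paper gives no proof of this statement; it cites \Lov, whose argument is an explicit, deterministic construction of uniform hypergraphs with large girth and large chromatic number. Your proposal takes a genuinely different route, the classical probabilistic deletion argument extending \Erd's proof for graphs, and it is correct.

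The delicate points all hold up:
\begin{itemize}
\item You count cycles as sequences of distinct vertices and distinct \emph{draw indices}. This makes the $j$ prescribed containments independent, each with probability $\binom{r}{2}/\binom{n}{2}$, so $\mathbb{E}[N_j]=O(n^{\theta j})$.
\item The same count treats repeated hyperedges as $2$-cycles, which are then destroyed. So the final $H$ has no repeated edges.
\item Deleting a vertex of a short cycle removes every incident edge, hence at least one edge of that cycle. Every cycle of $H$ is therefore a cycle of $H_0$ of the same size, which gives girth greater than $\ell$.
\item $H$ is the induced subhypergraph on the surviving vertices, so it inherits the property that every $s$-set spans an edge. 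Together with $|V(H)|\ge n/2$, this gives $\chr(H)>k$.
\item Both good events hold simultaneously with positive probability for $n$ large.
\end{itemize}

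The only thing to patch is the case $k=0$, where $s=\lceil n/(2k)\rceil$ is undefined. That case is trivial: a single edge works, or note that $\chr(H)>1$ implies $\chr(H)>0$.

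As for what each route buys: yours is short and self-contained but non-constructive. \Lov's yields explicit hypergraphs at the cost of a considerably more intricate argument. The paper only needs the existence of an $(r\ell)$-uniform hypergraph with chromatic number greater than $k$ and girth greater than $\ell$, so either proof serves equally well there.
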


\begin{prop}
    For every integers $k \ge 0$ and $\ell \ge 3$, there exists an integer $f(k,\ell) \ge 0$ such that $A_{f(k,\ell)}$ contains a subdigraph $S \subseteq A_{f(k,\ell)}$ with $\dic(S) > k$ and $\girth(S) > \ell$.
\end{prop}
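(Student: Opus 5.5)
Throughout, I use the structure of $A_n$: $n$ copies $T_1,\dots,T_n$ of $A_{n-1}$ and vertices $v_0,\dots,v_n$, with $v_j\Ra v_i$ for $i<j$, $v_i \Ra T_j$ and $T_i\Ra v_j$ for $i<j$, and $T_i\Ra T_j$ for $i<j$. The plan is an induction on $k$, as in \Cref{lemma:An}, but with the pigeonhole on the $v_i$'s replaced by a high-girth uniform hypergraph from \Cref{thm:Lov}. That way no two of the short ``back and forth'' paths interact in a short cycle. Suppose $S'\subseteq A_{m}$ has $\dic(S')>k-1$ and $\girth(S')>\ell$. Fix an $r$-uniform hypergraph $\mathcal H$ on vertex set $[N]$ with $\chr(\mathcal H)>k$ and $\girth(\mathcal H)>\ell$. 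The value of $r$ is to be chosen below; taking $r=2$ already gives a graph, but larger $r$ is needed to make $H_i$ disjoint and spread out.

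The construction inside $A_n$ for large $n$ is as follows. Choose indices $p_1<\dots<p_N$ and use the vertices $v_{p_1},\dots,v_{p_N}$ to represent the vertices of $\mathcal H$. Any backward arc $v_{p_b}v_{p_a}$ ($a<b$) can be completed into a directed cycle by a forward path $v_{p_a}\to T_s\to v_{p_b}$ for any $p_a<s\le p_b$. For each hyperedge $e=\{a_1<\dots<a_r\}$ we want a ``gadget'' that forces: if $v_{p_{a_1}},\dots,v_{p_{a_r}}$ all get colour $\alpha$, then there is a monochromatic directed cycle. Take the back arc $v_{p_{a_r}}v_{p_{a_1}}$ together with a private copy $T_{s_e}$, where $p_{a_1}<s_e\le p_{a_r}$, containing a copy $S'_e$ of $S'$. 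Keep only the arcs $v_{p_{a_1}}\Ra S'_e\Ra v_{p_{a_r}}$ plus the back arc. Since $\dic(S'_e)>k-1$, colour $\alpha$ appears in $S'_e$, which gives the monochromatic triangle $v_{p_{a_1}}\,u\,v_{p_{a_r}}$. Hence if $S$ has a $k$-dicolouring, the colouring restricted to the $v_{p_a}$'s is a proper colouring of the graph $\mathcal G$ whose edges are the pairs $\{a_1,a_r\}$ over hyperedges $e$. So I really need a graph $\mathcal G$ with $\chr(\mathcal G)>k$, large girth, and an ordering of its vertices such that the private copies $T_{s_e}$ can be placed injectively in the intervals $(p_{a_1},p_{a_r}]$. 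Room is not an issue: space the $p_a$ apart by gaps larger than the number of edges, and assign each edge its own $T$-slot inside the gap just after $p_{a_1}$. Then I only need the ordering property, and any graph works. So I would use Erd\H{o}s' graph $\mathcal G$ (the $r=2$ case of \Cref{thm:Lov}) with $\chr>k$ and $\girth>\ell$, and an arbitrary ordering of its vertices. In the induction, $S'_e$ is taken inside $T_{s_e}\cong A_{n-1}\supseteq A_m$.

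It remains to bound the girth of the underlying graph of $S$. Its edges are: the edges inside each $S'_e$, the back edges $v_{p_a}v_{p_b}$ for $ab\in E(\mathcal G)$, and all edges between each $v$-endpoint and $S'_e$. That last family is the problem: $v_{p_{a_1}}$ joined to all of $S'_e$ creates triangles with the edges inside $S'_e$, and even $v_{p_{a_1}} u\, v_{p_{a_r}} u'$ forms a 4-cycle. The fix is to keep only a single vertex $u_e\in S'_e$ joined to both ends. But the colouring argument needs \emph{some} vertex of colour $\alpha$, which forces joining many vertices.

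\textbf{This is the main obstacle.} I would resolve it by joining $v_{p_{a_1}}$ to a set $I\subseteq V(S'_e)$ and $v_{p_{a_r}}$ to a set $J$ with $I=J$ chosen to be a single colour class's worth of structure. Concretely, I would strengthen the induction to require that $S'$ comes with a distinguished vertex set $Z$ such that every $(k-1)$-dicolouring of $S'$ puts all colours on $Z$, and such that $Z$ is independent and pairwise far apart (distance $>\ell$) in $S'$. This is where the uniform hypergraph of \Cref{thm:Lov} with large $r$ enters. It lets one build such an $S'$ recursively: the vertices of $Z$ play the role of hypergraph vertices, which are spread out and linked only through long-girth structures. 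With $Z$ far apart, joining both endpoints $v_{p_{a_1}},v_{p_{a_r}}$ to all of $Z$ still creates 4-cycles $v\,z\,v'\,z'$. So each $z\in Z$ should instead attach to distinct copies, using many gadgets per edge, one per $z$, arranged via the hypergraph so that any cycle through gadgets corresponds to a cycle of $\mathcal H$ or $\mathcal G$, hence has length $>\ell$. I expect verifying this cycle correspondence to be the delicate step.
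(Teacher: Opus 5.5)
Your proposal has a genuine gap, and it is more basic than the $4$-cycles you flag. The gadget itself fails: the arcs $v_{p_{a_1}}u$, $uv_{p_{a_r}}$ and the back arc $v_{p_{a_r}}v_{p_{a_1}}$ form a triangle in the underlying graph. So $\girth(S)\le 3\le \ell$, however few vertices of $S'_e$ you attach. Every directed cycle you use to kill a colour class is itself a cycle of the underlying graph, so it must have length at least $\ell+1$. A monochromatic hyperedge therefore has to supply $\ell$ same-coloured vertices lying on one such cycle, not two. Collapsing $\mathcal H$ to the graph $\mathcal G$ of pairs $\{a_1,a_r\}$ discards exactly those extra vertices, so the claim that ``any graph works'' fails. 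The proposed strengthening with a set $Z$ does not rescue this. As written, the condition on $(k-1)$-dicolourings is vacuous because $\dic(S')>k-1$. You give no construction of such an $S'$. And the per-$z$ gadgets you sketch are still triangles.

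The missing idea, which is the paper's, uses all vertices of each hyperedge and needs no strengthened induction hypothesis. Pad $S'$ with isolated vertices so that $|V(S')|=r$, where $r=|V(A_{f(k-1,\ell)})|$. Take $\mathcal H$ to be $r\ell$-uniform with $\chr(\mathcal H)>k$ and $\girth(\mathcal H)>\ell$, using Theorem~\ref{thm:Lov}. For a hyperedge $e=\{a_1<\dots<a_{r\ell}\}$, place a private copy $S'_e$ with vertices $w_1,\dots,w_r$ in a slot after $v_{p_{a_r}}$ and before $v_{p_{a_{r(\ell-1)+1}}}$. For each $j\in[r]$, keep only the directed $(\ell+1)$-cycle
\[
w_j\, v_{p_{a_{r(\ell-1)+j}}}\, v_{p_{a_{r(\ell-2)+j}}} \cdots v_{p_{a_j}}\, w_j .
\]
Each vertex of $S'_e$ thus gets its own $\ell$ private vertices of $e$, linked by back arcs. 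If $e$ is monochromatic of colour $c$, then whichever $w_j$ has colour $c$ closes a monochromatic cycle. For the girth, the part of $S$ inside $e\cup V(S'_e)$ is $S'_e$ together with $r$ pendant cycles of length $\ell+1$, each meeting $S'_e$ in a single vertex. Any other cycle of $S$ induces a nontrivial closed trail in the hypergraph with edges $e\cup V(S'_e)$. Because the copies $S'_e$ are disjoint, that hypergraph has the same girth as $\mathcal H$.
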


\begin{proof}
    We proceed by induction on $k$. When $k=0$, we let $f(k,\ell)=0$ and take $S = A_0$.
    Let $k \ge 1$ and suppose $f(k-1,\ell)$ is well-defined.
    Denote $r = |V(A_{f(k-1,\ell)})|$ and let $H$ be a $(r\ell)$-uniform hypergraph $H$ with $\chr(H) > k$ and $\girth(H) > \ell$, which exists by \Cref{thm:Lov}.
    Denote $n=|V(H)|$ and $m=|E(H)|$ and set $f(k,\ell) = n \cdot m$.
    
    The tournament $A_{f(k,\ell)}$ is made of $f(k,\ell)$ copies $T_1, \dots, T_{f(k,\ell)}$ of $A_{f(k,\ell)-1}$ and $f(k,\ell)+1$ vertices $v_0, \dots, v_{f(k,\ell)}$ such that for $0 \le i < j \le f(k,\ell)$, we have $v_i \La v_j$ and $v_i \Ra T_j$, and for $1 \le i < j \le f(k,\ell)$, we have $T_i \Ra v_j$ and $T_i \Ra T_j$. As $f(k,\ell)-1 \ge f(k-1,\ell)$, each copy $T_i$ of $A_{f(k,\ell)-1}$ contains a copy of $A_{f(k-1,\ell)}$, and thus a subdigraph $S_i \subseteq T_i$ with $\dic(S_i) > k-1$ and $\girth(S_i) > \ell$. Without loss of generality we assume that $|V(S_i)|=r=|V(A_{f(k-1,\ell)})|$ for every $S_i$ (by keeping isolated vertices).

    Identify the $n$ vertices $v_{m}, v_{2m}, \dots, v_{nm}$ of $A_{f(k,\ell)}$ with the vertices $u_1, \dots, u_{n}$  of $H$. Denote $E(H)= \{e_1, \dots, e_{m}\}$.
    To each hyperedge $e_i = \{u_{i_1}, \dots, u_{i_{r\ell}} \}$ with $i_1 < \cdots < i_{r\ell}$, we associate a subdigraph $S'_i = S_{i_r \cdot m+i}$.
    Note that each hyperedge is associated to different subdigraph, and that we have $\{ u_{i_1}, \dots, u_{i_{r}} \} \La S_i' \La \{ u_{i_{r(\ell-1)+1}}, \dots, u_{i_{r\ell}} \}$.

    We build a subdigraph $S \subseteq A_{f(k,\ell)}$ from the subdigraphs $S_i'$ and the vertices $u_i$ by adding arcs as follow. For each hyperedge $e_i = \{u_{i_1}, \dots, u_{i_{r\ell}} \}$ associated to a subdigraph $S_i'$ with vertex set $V(S_i') = \{w^i_1, \dots, w_r^i\}$, we add to $S$ a set of arcs forming $r$ disjoint directed cycles of size $\ell+1$, each cycle intersecting $S_i'$ in exactly one vertex. More precisely, for each $j \in [r]$, we keep the cycle $C_i^j = w^i_j u_{i_{r(\ell-1)+j}} u_{i_{r(\ell-2)+j}} \cdots u_{i_{j}}$.
    
    \begin{figure}[h]
        \centering
        \begin{tikzpicture}
            \foreach \i in {1,...,9}
                \vertex (u_\i) at (\i*1.5,0) {$u_{i_{\i}}$} ;

            \draw[blue, fill=blue!20] (5.25,-1.5) circle (.6)
                node (S_i) at (5.25,-.6) {$S_i'$} ;
            \vertex (w_1) at (5.25,-1.8) {} ;
            \vertex (w_2) at (5.25,-1.5) {} ;
            \vertex (w_3) at (5.25,-1.2) {} ;
            
            \draw[-latex, red] (u_9) to[bend right] (u_6) ;
            \draw[-latex, red] (u_8) to[bend right] (u_5) ;
            \draw[-latex, red] (u_7) to[bend right] (u_4) ;
            \draw[-latex, red] (u_6) to[bend right] (u_3) ;
            \draw[-latex, red] (u_5) to[bend right] (u_2) ;
            \draw[-latex, red] (u_4) to[bend right] (u_1) ;

            \draw[-latex] (u_1) to (w_1) ;
            \draw[-latex] (u_2) to (w_2) ;
            \draw[-latex] (u_3) to (w_3) ;
            \draw[latex-] (u_7.south west) to (w_1) ;
            \draw[latex-] (u_8.south west) to (w_2) ;
            \draw[latex-] (u_9.south west) to[bend left=5] (w_3) ;
        \end{tikzpicture}
        \caption{Construction of $S \subseteq A_{f(k,\ell)}$ for one hyperedge of $H$ (here $\ell=3$ and $r=3$)}
        \label{fig:An_bis}
    \end{figure}
    
    We claim that this subdigraph $S \subseteq A_{f(k,\ell)}$ satisfies $\dic(S) > k$ and $\girth(S) > \ell$. Suppose $S$ has a $k$-dicoloring. As the hypergraph $H$ is not $k$-colorable, there exists an hyperedge $e_i = \{u_{i_1}, \dots, u_{i_{r\ell}} \}$ whose vertices all have the same color $c$. Consider the associated subdigraph $S'_i$. As $\dic(S_i') > k-1$, there is a vertex $w^i_j \in S_i'$ of color $c$. Then $C^i_j$ is a monochromatic directed cycle, a contradiction. Henceforth $\dic(S) > k$.

    Let $C$ be a (not necessarily directed) cycle $C$ in $S$. Consider the hypergraph $H'$ with vertex set $V(S)$ and hyperedges $\{ e_i \cup S_i' : i \in [m]\}$. As the subdigraphs $S_i'$ are disjoint, we have $\girth(H') = \girth(H)$. Now every arc of $S$ is included in an hyperedge of $H'$, hence the cycle $C$ induces a closed trail in $H'$. If this trail is trivial (only one edge appears in the sequence defining the trail), that is $C$ is included in $S_i' \cup e_i$ for some $i \in [m]$, then either $C$ in contained in $S_i'$ and $|C| \ge \girth(S_i') > \ell$, or $C$ contains a cycle of the form $C^i_j$ and $|C| \ge |C^i_j| = \ell+1$. Otherwise the trail contains a non-trivial cycle of $H'$, thus $|C| \ge \girth(H') > \ell$.

\end{proof}


\paragraph*{Acknowledgement}

We thank Pierre Aboulker for early discussions on this problem.

\paragraph*{Use of AI}

The authors declare no significant use of AI in the creation of this work.


\bibliographystyle{plain}
\bibliography{biblio}

\end{document}

%% file: macros.tex
\usepackage[T1]{fontenc}
\usepackage[utf8]{inputenc}
\usepackage[margin=3cm]{geometry}

\usepackage{amsmath, amsfonts, amssymb,amsthm}
\usepackage[overload]{keytheorems}

\usepackage{hyperref, url, cleveref}
\usepackage{tikz, xcolor}
\usepackage{authblk}

\newcommand{\Ra}{\Rightarrow}

\newcommand{\La}{\Leftarrow}

\newcommand{\ora}[1]{\overrightarrow{#1}}

\DeclareMathOperator{\chr}{\chi}
\DeclareMathOperator{\ome}{\omega}

\DeclareMathOperator{\girth}{girth}
\DeclareMathOperator{\dic}{\ora \chi}
\DeclareMathOperator{\diomega}{\ora \omega}

\DeclareMathOperator{\dig}{\ora\girth}

\newcommand{\Erd}{Erd\H os}

\newcommand{\Lov}{Lov\'asz}

\renewcommand{\emptyset}{\varnothing}

\renewcommand{\phi}{\varphi}

\let\le\leqslant
\let\ge\geqslant

\newtheorem{theorem}{Theorem}
\newtheorem{lemma}[theorem]{Lemma}
\newtheorem{conj}[theorem]{Conjecture}
\newtheorem{prop}[theorem]{Proposition}

	{\noindent {\it Proof of Claim:
    }}
	{\hfill $\square$ \par\vspace{11pt}}

\usetikzlibrary{arrows.meta}
\tikzstyle{vertex}=[circle, draw, inner sep=1pt, minimum size=6pt]
\newcommand{\vertex}{\node[vertex]}